\documentclass{amsart}
\usepackage{amsfonts}
\usepackage{hyperref}
\usepackage{graphicx}
\usepackage{tabularx}
\usepackage{array}
\usepackage[usenames,dvipsnames]{color}
\usepackage{comment}
\usepackage{amsmath}
\usepackage{amsthm}
\usepackage{amssymb}
\usepackage{fullpage}
\usepackage{xcolor}
\usepackage{tikz}
\usepackage{listings}

\tikzstyle{legend_general}=[rectangle, rounded corners, thin,
                          top color= white,bottom color=lavander!25,
                          minimum width=2.5cm, minimum height=0.8cm,
                          violet]

\DeclareMathOperator{\ch}{ch}
\renewcommand{\epsilon}{\varepsilon}

\newtheorem{theorem}{Theorem}[section]

\newtheorem{conjecture}[theorem]{Conjecture}

\newtheorem{lemma}[theorem]{Lemma}

\theoremstyle{definition}

\title{Bipartite graphs are $(\frac{4}{5}-\epsilon) \frac{\Delta}{\log \Delta}$-choosable}

\author{Peter Bradshaw}
\address{Department of Mathematics, University of Illinois Urbana-Champaign, Urbana, IL, USA}
\email{pb38@illinois.edu}

\author{Bojan Mohar}
\address{Department of Mathematics, Simon Fraser University, Burnaby, BC, Canada}
\email{mohar@sfu.ca}

\author{Ladislav Stacho}
\address{Department of Mathematics, Simon Fraser University, Burnaby, BC, Canada}
\email{lstacho@sfu.ca}

\thanks{Peter Bradshaw received support from NSF RTG grant DMS-1937241.}
\thanks{Research of Ladislav Stacho was supported by NSERC grant No. R611368.}
\thanks{
Bojan Mohar was supported in part by an NSERC Discovery Grant R832714 (Canada), and in part by the ERC Synergy grant KARST (European Union, ERC, KARST, project number 101071836).~
On leave from:
FMF, Department of Mathematics, University of Ljubljana, Ljubljana, Slovenia.}

\begin{document}

\begin{abstract}
Alon and Krivelevich conjectured that if $G$ is a bipartite graph of maximum degree $\Delta$, then the choosability (or list chromatic number) of $G$ satisfies $\ch(G)  = O \left ( \log \Delta \right )$. Currently, the best known upper bound for $\ch(G)$ is $(1 + o(1)) \frac{\Delta}{\log \Delta}$, which also holds for the much larger class of triangle-free graphs. We prove that for $\epsilon = 10^{-3}$, every bipartite graph $G$ of sufficiently large maximum degree $\Delta$ satisfies $\ch(G) < (\frac{4}{5}  -\epsilon) \frac{\Delta}{\log \Delta}$.
This improved upper bound suggests that list coloring is fundamentally different for bipartite graphs than for triangle-free graphs and hence gives a step toward solving the conjecture of Alon and Krivelevich.
\end{abstract}

\maketitle

\section{Introduction}

Given a graph $G$ for which each vertex $v \in V(G)$ has an associated list $L(v)$ of colors, an \emph{$L$-coloring} of $G$ is a proper coloring that assigns each vertex $v$ a color from $L(v)$. If $G$ has an $L$-coloring whenever $|L(v)| = k$ for each $v \in V(G)$, then $G$ is \emph{$k$-choosable}. We write $\ch(G)$ for the \emph{choosability} of $G$, defined as the least integer $k$ for which $G$ is $k$-choosable.
As $\chi(G) = k$ if and only if $G$ has an $L$-coloring for the assignment $L(v) = \{1,\dots, k\}$ to each vertex $v \in V(G)$,
it follows that $\chi(G) \leq \ch(G)$ for every graph $G$.

The choosability of a graph is often much larger than its chromatic number. As an example, Erd\H{o}s, Rubin, and Taylor proved that the choosability of the complete bipartite graph $K_{n,n}$ satisfies $ch(K_{n,n}) = (1+o(1)) \log_2 n$. More generally, Saxton and Thomason \cite{Saxton}  showed that if $G$ is a graph of minimum degree $\delta$, then $\ch(G) \geq \left ( 1 + o(1) \right ) \log_2 \delta$.
Alon and Krivelevich made the following conjecture in 1998, which states that this lower bound is best possible up to a constant factor when $G$ is bipartite:
\begin{conjecture}[\cite{AK}]
\label{conj}
If $G$ is a bipartite graph of maximum degree $\Delta$, then $\ch(G) = O ( \log \Delta)$.
\end{conjecture}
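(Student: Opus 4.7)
The plan is a two-phase attack: a long phase of semi-random iterative coloring that drives residual degrees far below list sizes, followed by a short Lov\'asz Local Lemma finish. Fix a bipartition $G = (A \cup B, E)$ and lists $L(v)$ with $|L(v)| = k = C \log \Delta$ for a sufficiently large absolute constant $C$. In each round of the iterative phase, every uncolored vertex attempts a uniformly random color from its current residual list $L_t(v)$, and the attempt succeeds if no neighbor attempts the same color. I would track $|L_t(v)|$ and the residual degree $d_t(v)$, aiming to reach a round $T$ at which $d_T(v) \ll |L_T(v)|$ for every $v$; at that point a single Local Lemma application completes the coloring using only $O(\log \Delta)$ colors from the start.

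The distinctive bipartite ingredient, which must be present because the triangle-free analog of this machinery is essentially tight and yields only $(1+o(1))\Delta/\log \Delta$, is an asymmetric per-round analysis that exploits $4$-cycles. In each round, I would freeze the partial coloring on $A$, sample proposals only on $B$, and study the shrinkage of lists on $A$. Two $A$-vertices sharing a common $B$-neighbor are positively correlated in which colors get blocked from them (the shared neighbor either blocks a given color for both or for neither). Summed over the $4$-cycles through a vertex $v \in A$, this correlation should show that the number of \emph{distinct} colors forbidden to $v$ is strictly smaller than the sum of per-neighbor contributions, producing a per-round multiplicative gain on $|L_t(v)|/d_t(v)$ that is super-constant rather than merely $e^{-1+o(1)}$. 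Compounding such a gain over $O(\log \log \Delta)$ rounds opens up the polynomial separation required between residual list size and residual degree.

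The main obstacle, and the reason Conjecture~\ref{conj} has resisted proof since 1998, is precisely this quantitative link between $4$-cycle density and an exponent-level improvement in per-round progress. For the extremal bipartite graphs that minimize $4$-cycle counts, such as incidence graphs of generalized quadrangles or random $\Delta$-regular bipartite graphs, every vertex lies in only $O(\Delta)$ four-cycles, and the naive positive-correlation savings degrade to a constant factor, reproducing the triangle-free bound. A natural countermove is a preprocessing step before the iteration: either delete a small vertex set whose removal forces the remainder into a $4$-cycle-dense regime, or pass to a dense induced subgraph via a density-increment argument and list-color the shed vertices at the end using the extra budget. Making such preprocessing compatible with arbitrary list constraints, controlling the cross-round dependence between the two sides of the bipartition, and genuinely ruling out the triangle-free-looking extremal configurations are where I expect the decisive difficulty to lie.
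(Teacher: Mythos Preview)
The statement you are attempting to prove is a \emph{conjecture}, not a theorem, and the paper does not prove it. The authors state explicitly that ``Conjecture~\ref{conj} is still widely open,'' and their main contribution (Theorem~\ref{thm:main}) is the much weaker bound $\ch(G) < 0.797\,\Delta/\log\Delta$, obtained by a coupon-collector argument with nonuniform color probabilities plus a single application of the Local Lemma. There is therefore no proof in the paper to compare your proposal against.

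Your document is not a proof but a research plan, and you yourself identify the gap: the semi-random nibble with asymmetric per-round analysis needs a super-constant multiplicative gain on $|L_t(v)|/d_t(v)$ in every round, and you correctly note that on bipartite graphs with few $4$-cycles (random $\Delta$-regular bipartite graphs, incidence graphs of generalized quadrangles) the positive-correlation savings collapse to a constant factor, reproducing the triangle-free $(1+o(1))\Delta/\log\Delta$ barrier. The preprocessing ideas you float---density increment, deletion to force $4$-cycle density---are plausible directions but come with no argument that they can be made to work for arbitrary lists, and indeed this is exactly where the problem has been stuck since 1998. Until that step is supplied, the proposal does not constitute a proof of the conjecture.
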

Conjecture \ref{conj} is still widely open, and currently the best known upper bound for the choosability of a bipartite graph of maximum degree $\Delta$ is $(1+o(1)) \frac{\Delta}{\log \Delta}$.
Molloy \cite{Molloy} proved this upper bound for the much larger class of triangle-free graphs, improving previous results of Johansson \cite{JohanssonTF}, Jamall \cite{Jamall}, and Pettie and Su \cite{PS} by a constant factor.
Bernshteyn \cite{BernshteynDP} 
used a simplified method to prove the same upper bound in the more general setting of correspondence colorings.
For triangle-free graphs, Molloy's upper bound is close to the best possible. Indeed, if $\Delta$ is fixed and $G$ is a large random $\Delta$-regular graph on $n$ vertices, then  asymptotically almost surely (a.a.s.), $\chi(G) =   (\frac{1}{2} + o(1)) \frac{\Delta}{\log \Delta}$ \cite{Frieze}, where the $o(1)$ term approaches $0$ as $\Delta$ increases. Furthermore, 
it is straightforward to show that the average degree of every small subgraph $H$ of such graph  $G$ (i.e.~$|H| < \log \log n$) is a.a.s.~less than $5$. Since the expected number of triangles in $G$ is less than $\Delta^3$ \cite{BB},
    it follows that removing all triangles from $G$ a.a.s.~reduces the chromatic number by at most $5$, giving a triangle-free subgraph $H$ of $G$ with maximum degree $\Delta$ satisfying $\ch(H) \geq \chi(H) \geq (\frac{1}{2} + o(1)) \frac{\Delta}{\log \Delta}$.
%a.a.s.~has a chromatic number of the form $(\frac{1}{2} + o(1)) \frac{\Delta}{\log \Delta}$ and a.a.s.~contains a perfect matching $M \subseteq E(G)$ such that $G \setminus M$ is triangle free, and hence $G \setminus M$ is a triangle-free graph satisfying $\ch(G \setminus M) \geq \chi(G \setminus M) \geq (\frac{1}{2} + o(1)) \frac{\Delta}{\log \Delta}$.

For triangle-free graphs, improving the coefficient in the upper bound $\ch(G) \leq (1+o(1)) \frac{\Delta}{\log \Delta}$ 
seems to be a difficult problem. Davies, de Joannis de Verclos, Kang, and Pirot \cite{DDKP} opine that reducing the $1+o(1)$ coefficient below $1$ would be a significant advance in current knowledge, and they point out that such an improvement would also improve a classical lower bound of Shearer \cite{Shearer} for the independence number of a triangle-free graph established in 1983. Furthermore, the upper bound $\ch(G) \leq (1+o(1)) \frac{\Delta}{\log \Delta}$
matches \emph{shattering threshold} for the problem of coloring random $\Delta$-regular graphs \cite{ZK}, also called \emph{algorithmic barrier} \cite{Ach}, a threshold that arises in many problems for random graphs.
For the problem of graph coloring, 
finding an efficient algorithm to color a random $\Delta$-regular graph
with $(1 - \epsilon) \frac{\Delta}{\log \Delta}$ colors 
is a major unsolved problem \cite{Ach, ZK}. Molloy \cite{Molloy} points out that since the triangles in a random $\Delta$-regular graph are few and sparsely distributed, an efficient algorithm that colors a $\Delta$-regular triangle-free graph with $(1 - \epsilon) \frac{\Delta}{\log \Delta}$ colors could also be applied to random $\Delta$-regular graphs and hence break the algorithmic barrier. 

Recently, Alon, Cambie, and Kang \cite{ACK} took a step toward answering Conjecture \ref{conj} using an approach related to the coupon collector problem. They showed that if $G$ is a bipartite graph of maximum degree $\Delta$ for which vertices in one partite set have color lists of size $(1 + o(1))\frac{\Delta}{\log \Delta}$, then the sizes of the color lists in the other partite set can be drastically reduced. In fact, their method implies that for any unbounded increasing function $\omega = \omega(\Delta)$, if the vertices in one partite set of $G$ have color lists of size $\omega$, then there exists a function $o(1)$ which approaches $0$ as $\Delta$ increases such that when each vertex in the other partite set has a list of size $(1 + o(1))\frac{\Delta}{\log \Delta}$, $G$ has a proper list coloring.

In this note, we prove that a bipartite graph $G$ admits a list coloring even when the list sizes in both parts of $G$ are reduced below $\frac{\Delta}{\log \Delta}$.
\begin{theorem}
\label{thm:main}
    If $G$ is a bipartite graph of sufficiently large maximum degree $\Delta$, then $\ch(G) < 0.797 \frac{ \Delta}{\log \Delta}$. 
\end{theorem}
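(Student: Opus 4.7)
The plan is to exploit the bipartite structure of $G$ via a two-phase random coloring. Let $G = (A \cup B, E)$ be a bipartite graph of maximum degree $\Delta$, and suppose each vertex is assigned a list of size $k = (\tfrac{4}{5} - \epsilon)\Delta/\log\Delta$. In Phase~1, for each $u \in A$ I would independently sample a color $c(u) \in L(u)$ according to a carefully chosen distribution; since $A$ is independent, the resulting partial coloring is automatically proper. In Phase~2, I would greedily color each $v \in B$ from its residual list $L'(v) := L(v) \setminus \{c(u) : u \in N(v)\}$; since $B$ is also independent, this greedy step succeeds as soon as $L'(v) \neq \emptyset$ for every $v \in B$. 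This reduces the entire problem to a Phase~1 survival statement, which is structurally simpler than what Molloy/Bernshteyn must prove in the triangle-free setting, where no single side can be colored fully in one shot.

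The core technical task is to show that Phase~1 leaves $L'(v) \neq \emptyset$ simultaneously for every $v \in B$ with positive probability. I would apply the Lovász Local Lemma to the bad events $E_v := \{L'(v) = \emptyset\}$. Each $E_v$ is measurable with respect to the colors assigned to $N(v) \subseteq A$, so $E_v$ and $E_{v'}$ are independent unless $v$ and $v'$ share a neighbor in $A$; the dependency degree is therefore $O(\Delta^2)$. To apply LLL it suffices to show $\Pr[E_v] = o(\Delta^{-2})$ for every $v \in B$, which I would do by first lower-bounding $\mathbb{E}[|L'(v)|]$ and then invoking a concentration inequality (Talagrand's or Janson's) to convert the expectation bound into an exponential tail bound on $\Pr[|L'(v)| = 0]$.

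The main obstacle is that the uniform Phase~1 sampling only yields $\mathbb{E}[|L'(v)|] \geq k(1 - 1/k)^{\Delta} \approx k\, \Delta^{-1/c}$ when $k = c\Delta/\log\Delta$, which is vanishing whenever $c \leq 1$; hence the naive one-shot approach cannot break the triangle-free barrier. The innovation needed to reach the coefficient $\tfrac{4}{5}$ must involve a more refined random process, which I expect to include some or all of: (i) a non-uniform sampling distribution that suppresses colors appearing in many neighbors' lists (thereby flattening the worst-case $M_c := |\{u \in N(v) : c \in L(u)\}|$ against which Jensen is applied); (ii) a multi-stage nibble in which only a small fraction of $A$ is colored per stage, with effective degrees on $B$ and residual lists updated between stages; (iii) a final corrective step using a reserved sub-palette of $L(u)$ to finish any vertices of $A$ left uncolored by the nibble. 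The hardest part will be the concentration/dependency analysis needed to push the LLL application below the $\Delta/\log\Delta$ threshold, together with the optimization over nibble parameters that ultimately produces the constant $\tfrac{4}{5}$ (with the $10^{-3}$ slack absorbing the lower-order error terms).
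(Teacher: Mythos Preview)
Your high-level framework---color $A$ in one shot with some distribution, then apply the Lov\'asz Local Lemma to the events $E_v = \{L'(v) = \emptyset\}$ for $v \in B$, with dependency degree $O(\Delta^2)$---is exactly the skeleton the paper uses. You also correctly diagnose that uniform sampling cannot break the $c = 1$ barrier. But the proposal stops short of the actual mechanism, and the alternatives you list are not the ones that work here.

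The paper does not use a nibble, a reserved sub-palette, or Talagrand/Janson concentration on $|L'(v)|$. The entire argument is a single-round coloring of $A$ followed by a direct \emph{coupon-collector} bound on $\Pr[E_v]$: writing $\rho_w(c) = \sum_{v \in N_c(w)} P_v(c)$, one shows via negative correlation of the coverage events that
\[
\Pr[E_w] \le \exp\Bigl(-\sum_{c \in L(w)} e^{-\rho_w(c)/(1-p)}\Bigr),
\]
and then uses Jensen to reduce the problem to controlling the \emph{average} value of $\rho_w(c)$ on some dense subset $L^* \subseteq L(w)$. The concentration step you describe (first bound $\mathbb{E}[|L'(v)|]$, then apply Talagrand) is not how the argument proceeds and would run into the same small-expectation problem you identify for uniform sampling.

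The missing idea is the specific non-uniformity. It is \emph{not} based on how often a color appears in neighbors' lists (which would require global coordination). Instead, each $v \in A$ sorts $L(v)$ as integers and assigns probability roughly proportional to $1 - \tfrac{3}{4} I(v,c)/k$, where $I(v,c)$ is the rank of $c$ in $L(v)$. The point is that if $|L(v) \cap L(w)|$ is large, then the highest-ranked colors of $L(w)$ necessarily also have high rank in $L(v)$, and hence receive low probability from $v$. One then case-splits on the normalized overlap weight $z = \frac{1}{\Delta k}\sum_{v \in N(w)} |L(v) \cap L(w)|$: for large $z$ the top $\epsilon k$ colors of $L(w)$ have small average $\rho_w(c)$ (bounded roughly by $\tfrac{8}{5}(1 - \tfrac{3}{4}z)\Delta/k$), while for small $z$ the average over all of $L(w)$ is small (roughly $\tfrac{8}{5}z(1 - \tfrac{3}{8}z)\Delta/k$). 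Balancing these at $z \approx 2/3$ yields the coefficient $4/5$; a further tweak to the weight function (making it constant on the top $10\%$ of indices) squeezes out the extra $10^{-3}$. None of this structure is visible in your proposal.
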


Our proof uses the Lov\'asz Local Lemma and yields an efficient randomized algorithm via Moser's \emph{entropy compression} method \cite{Moser, Tao}.
%Our proof can readily be converted into a randomized algorithm that gives $G$ a list coloring when each vertex has a list of $\frac{4}{5} + o(1))\frac{ \Delta}{\log \Delta}$ colors.
Given the difficulty of
obtaining an upper bound of the form $(1-\epsilon) \frac{\Delta}{\log \Delta}$ for the choosability of a triangle-free graph of maximum degree $\Delta$, 
our result suggests that the list-coloring problem is 
%easier 
%{\LS I am not sure whether the "easiest" is correct here. Maybe, "different" with a slight reformulation?} 
fundamentally different for bipartite graphs than for triangle-free graphs, and that certain obstacles of the triangle-free setting do not appear in the bipartite setting. Hence, our result gives a step towards Conjecture \ref{conj}.
The main ingredient in our proof is the coupon collection argument used by Alon, Cambie, and Kang \cite{ACK}.
In particular, we show that when non-uniform probabilities are used in this coupon collector argument, then a similar argument yields an improved upper bound for the choosability of a bipartite graph.

\section{Main result}

In our proofs, we omit floors and ceilings, as they have little effect on our arguments.
We use the well-known Lov\'asz Local Lemma \cite{LLL}, stated in the following form \cite[Chapter 4]{MolloyReed}:

\begin{lemma}[Lov\'asz Local Lemma]
\label{lem:LLL}
Consider a set $\mathcal E$ of bad events such that for each $A \in \mathcal E$,
\begin{itemize}
    \item $\Pr(A) \leq p < 1$, and
    \item $A$ is mutually independent with all but at most $D$ of the other events.
\end{itemize}
If\/ $4Dp \leq 1$, then with positive probability none of the events in $\mathcal E$ occurs.
\end{lemma}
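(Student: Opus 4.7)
The plan is to prove the standard intermediate claim by induction and then assemble the conclusion via the chain rule. Concretely, I will show that for every event $A \in \mathcal E$ and every subset $S \subseteq \mathcal E \setminus \{A\}$,
$$\Pr\!\left(A \,\Big|\, \bigcap_{B \in S} \overline{B}\right) \leq 2p.$$
Granting this, I list $\mathcal E = \{A_1,\dots,A_n\}$ in any order and apply the chain rule,
$$\Pr\!\left(\bigcap_{i=1}^n \overline{A_i}\right) = \prod_{i=1}^n \Pr\!\left(\overline{A_i} \,\Big|\, \bigcap_{j<i} \overline{A_j}\right) \geq (1-2p)^n > 0,$$
where the final strict inequality uses $4Dp \leq 1$, which forces $2p \leq 1/(2D) < 1$ (the degenerate case $D=0$ is immediate from independence). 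So everything reduces to the displayed $2p$ bound.

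The induction is on $|S|$. The base case $S = \emptyset$ is immediate, since $\Pr(A) \leq p \leq 2p$. For the inductive step, I partition $S = S_1 \sqcup S_2$, where $S_1$ collects those events in $S$ that are \emph{not} mutually independent of $A$ (so $|S_1| \leq D$ by hypothesis), and $S_2$ collects the rest. I then expand the target probability as a ratio:
$$\Pr\!\left(A \,\Big|\, \bigcap_{B \in S}\overline{B}\right) = \frac{\Pr\!\left(A \cap \bigcap_{B \in S_1} \overline{B} \,\Big|\, \bigcap_{B \in S_2}\overline{B}\right)}{\Pr\!\left(\bigcap_{B \in S_1}\overline{B} \,\Big|\, \bigcap_{B \in S_2}\overline{B}\right)}.$$

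For the numerator I drop the $\bigcap_{B \in S_1} \overline B$ factor and invoke mutual independence of $A$ with the events in $S_2$, obtaining a bound of $\Pr(A) \leq p$. For the denominator I use the union bound together with the inductive hypothesis on each event in $S_1$, conditioned only on $S_2$ (which is a strictly smaller set than $S$ whenever $S_1$ is nonempty; if $S_1$ is empty the claim is immediate):
$$\Pr\!\left(\bigcup_{B \in S_1} B \,\Big|\, \bigcap_{B \in S_2} \overline{B}\right) \leq \sum_{B \in S_1} 2p \leq 2Dp \leq \tfrac{1}{2}.$$
Hence the denominator is at least $1/2$ and the ratio is at most $2p$, completing the induction.

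The main subtlety will be the numerator step, where mutual independence of $A$ from $S_2$ (not $S$) is what justifies replacing the conditional probability of $A$ by $\Pr(A)$; the hypothesis of the lemma says precisely that $A$ is independent of all but at most $D$ of the other events, and those $D$ exceptional events are exactly what one places into $S_1$. The constant $4$ in $4Dp \leq 1$ is calibrated so that the inductive bound $2p$ still keeps the union bound in the denominator at most $1/2$, leaving room for one more factor of $2$ in the ratio.
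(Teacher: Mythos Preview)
The paper does not prove this lemma; it merely states it with a citation to Molloy and Reed's book. Your argument is the standard inductive proof of the symmetric Local Lemma and is correct, so there is nothing to compare. One small technical point worth making explicit: the conditional probabilities you manipulate require $\Pr\bigl(\bigcap_{B \in S}\overline{B}\bigr)>0$, which follows along with the main claim by the same induction on $|S|$ (via the chain rule and the bound $1-2p>0$), but you do not state this.
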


We also use the following well-known corollary of Jensen's inequality:
\begin{lemma}[{\cite[(3.6.1)]{Hardy}}]
\label{lem:Jensen}
    If $f:\mathbb R \rightarrow \mathbb R$ is a convex function and $x_1, \dots, x_t \in \mathbb R$, then
    \[f(x_1)+ \dots + f(x_t) \geq t f \left ( \frac{x_1+\dots+x_t}{t} \right ).\]
\end{lemma}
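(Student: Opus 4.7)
The plan is to prove this finite Jensen inequality by induction on $t$. The base case $t = 1$ is trivial, since it reduces to $f(x_1) \geq f(x_1)$. For $t = 2$, the inequality $f(x_1) + f(x_2) \geq 2 f \left( \frac{x_1 + x_2}{2} \right)$ is exactly the two-point definition of convexity applied with weights $\lambda = 1 - \lambda = \frac{1}{2}$.

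For the inductive step, I would assume the inequality holds for $t - 1$ and prove it for $t$. Set $\bar{x} = (x_1 + \dots + x_t)/t$ and $\bar{y} = (x_2 + \dots + x_t)/(t-1)$. A short calculation gives
\[\bar{x} = \frac{1}{t} x_1 + \frac{t-1}{t} \bar{y},\]
so $\bar{x}$ is a convex combination of $x_1$ and $\bar{y}$. Applying the two-point convexity inequality to this combination yields
\[f(\bar{x}) \leq \frac{1}{t} f(x_1) + \frac{t-1}{t} f(\bar{y}),\]
and multiplying through by $t$ gives $t f(\bar{x}) \leq f(x_1) + (t-1) f(\bar{y})$. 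The induction hypothesis applied to the $t-1$ numbers $x_2, \dots, x_t$ bounds $(t-1) f(\bar{y}) \leq f(x_2) + \dots + f(x_t)$, and substituting this into the previous inequality completes the induction.

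I do not expect any real obstacle. The only substantive observation is that the mean of $t$ numbers can be written as a convex combination of a single entry and the mean of the remaining $t-1$ entries, which lets the two-point convexity inequality be unwound one entry at a time. As a backup I would keep in mind Cauchy's doubling trick: first prove the inequality when $t$ is a power of $2$ by grouping pairs and applying the midpoint case repeatedly, and then deduce the result for arbitrary $t$ by padding the sequence $x_1, \dots, x_t$ with additional copies of the mean $\bar{x}$ up to the next power of $2$, since the padded average is still $\bar{x}$ and the extra terms contribute $f(\bar{x})$ on both sides.
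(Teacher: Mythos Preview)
Your induction argument is correct and standard: writing the $t$-point mean as a convex combination of one entry and the mean of the remaining $t-1$ entries, applying the defining inequality of convexity with weight $1/t$, and then invoking the induction hypothesis is exactly the textbook derivation. The backup via Cauchy's doubling trick is also valid.

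Note, however, that the paper does not supply a proof of this lemma at all: it is stated with a citation to Hardy and used as a black box. So there is no ``paper's own proof'' to compare against; your proposal simply fills in a proof that the authors chose to omit.
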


Before proving our main result, we need a lemma about the coupon collector problem, which takes place in the following setting. We let $L', L_1, \dots, L_{\Delta}$, be subsets of $\mathbb N$ of size exactly $k$. We often refer to the elements of $\mathbb N$ as \emph{colors}.
%Let $|L| = |L_1| = \dots = |L_{\Delta}| = k$. 
%We imagine that the colors in $L$ represent differently colored coupons that a coupon collector wishes to gather and that the collector has a set of $\Delta$ unopened boxes, each with one coupon inside. We may further imagine that the color of the coupon contained in each $i$th box is one of the colors in
%$L_i$. 
%Given distributions on the color probabilities of each coupon, 
%we are interested in the probability that the coupon collector gathers 
%coupons of all $k$ colors in $L$.
%In order to estimate the probability that the coupon collector successfully obtains coupons of all colors in $L$, we need to define some parameters.
We let $0 < p < 1$ be some positive number (possibly dependent on $\Delta$). For each value $i$ ($1 \leq i \leq \Delta$), we define a probability distribution $P_i: L_i \rightarrow [0,p)$. Since $P_i$ is a probability distribution, we require that $\sum_{c \in L_i} P_i(c) = 1$, and for each color $c \not \in L_i$, we write $P_i(c) = 0$.
We also define independent random variables $\phi_1, \dots, \phi_{\Delta}$,
%representing the colors of the $\Delta$ coupons,
so that for each $i$ and $c \in L_i$, $\phi_i = c$ with probability $P_i(c)$.
For each $c \in L'$, we write $\rho(c) = \sum_{i = 1}^{\Delta} P_i(c)$.
Then, we prove the following lemma, which gives us an upper bound on the probability that for every color $c \in L'$, there exists some random variable $\phi_i$, so that $\phi_i=c$.
If each random variable $\phi_i$ represents the color of a coupon, 
then the probability that every color in $L'$ equals some random variable $\phi_i$ 
represents the probability that a coupon collector successfully collects a coupon in each color of $L'$ subject to our probability distributions.
%i.e.~the probability that for each of the $k$ colors $c \in L$, the coupon collector obtains  a coupon of color $c$. 
The ideas in this lemma are similar to those of
%in the coupon collection argument of 
Alon, Cambie, and Kang \cite[Section 3]{ACK}.

\begin{lemma}
\label{lem:coupon}
Let\/ $0 < \epsilon \leq 1$ and $0 < a  \leq 1$ 
%{\LS I do not think we want $a=0$ as we would be dividing by 0 in the left hand side of the inequality below ($|L^*|$ could be zero in that case as $k$ would be 0).}
%{\PB I agree. Thank you!}
be fixed, and let $\Delta$ be sufficiently large. 
Let $k = \left \lceil \frac{a \Delta}{(1 - p) (\log \Delta - 4 \log \log \Delta)} \right \rceil$.
Suppose that 
there exists a set $L^* \subseteq L'$ of size at least $\epsilon k$ such that the average value $\rho(c)$ for $c \in L^*$ satisfies 
\[\frac{1}{|L^*|}\sum_{c \in L^*} \rho(c) \leq \frac{ a \Delta }{ k }.\]
Then~ 
$\Pr \left (L' \subseteq \{\phi_1, \dots, \phi_{\Delta} \} \right ) < \exp(- \log^2 \Delta)$.
\end{lemma}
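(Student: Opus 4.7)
My plan is to upper-bound $\Pr(L' \subseteq \{\phi_1,\ldots,\phi_\Delta\})$ by discarding the colors in $L' \setminus L^*$, reducing to a product bound over $c \in L^*$, and then applying Jensen's inequality via Lemma~\ref{lem:Jensen}. For each $c \in L^*$ let $A_c$ denote the event $c \in \{\phi_1,\ldots,\phi_\Delta\}$. Independence of the $\phi_i$ yields $\Pr(\overline{A_c}) = \prod_{i=1}^\Delta(1-P_i(c))$, and the elementary inequality $1-x \ge \exp(-x/(1-p))$ (valid for $x \in [0,p]$), combined with $P_i(c) < p$, gives $\Pr(A_c) \le 1 - \exp(-\rho(c)/(1-p))$.

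The key intermediate step is the product estimate
\[\Pr\Bigl(\bigcap_{c \in L^*} A_c\Bigr) \;\le\; \prod_{c \in L^*}\Pr(A_c).\]
I would justify it via negative association. For each fixed $i$, the indicators $X_{i,c} := \mathbf{1}\{\phi_i = c\}$ with $c \in L'$ arise from a single multinomial trial and so are negatively associated, and this property is preserved under independent products, making the whole family $\{X_{i,c}\}_{i,c}$ negatively associated. Since each $A_c$ is an increasing function of the block $\{X_{i,c}\}_i$, and the blocks indexed by distinct $c$ are disjoint, the product inequality follows.

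Taking logarithms, $\log \Pr(\bigcap_c A_c) \le \sum_{c \in L^*}\log(1 - \exp(-\rho(c)/(1-p)))$. The function $g(x) := \log(1 - e^{-x/(1-p)})$ is concave and increasing, so $-g$ is convex, and Lemma~\ref{lem:Jensen} combined with the hypothesis $|L^*|^{-1}\sum_{c \in L^*}\rho(c) \le a\Delta/k$ yields $\sum_c g(\rho(c)) \le |L^*|\,g(a\Delta/k)$. The definition of $k$ ensures $a\Delta/((1-p)k) \le \log\Delta - 4\log\log\Delta$, hence $e^{-a\Delta/((1-p)k)} \ge \log^4\Delta/\Delta$ and $g(a\Delta/k) \le -\log^4\Delta/\Delta$. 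Substituting $|L^*| \ge \epsilon k$ and the explicit value of $k$, the upper bound becomes of order $-\epsilon a \log^3 \Delta/(1-p)$, which is smaller than $-\log^2\Delta$ for $\Delta$ sufficiently large.

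The main obstacle I anticipate is the product inequality above. The dual estimate $\Pr(\bigcap_c \overline{A_c}) \le \prod_c \Pr(\overline{A_c})$ is immediate from the Weierstrass product inequality applied to $\prod_i(1 - \sum_c P_i(c))$, but for three or more events this does not transfer to the $A_c$'s by taking complements, so a genuine correlation-type input is needed. If one prefers not to invoke negative association of multinomial indicators, an alternative is a coupling argument against independent Bernoullis $Y_{i,c} \sim \mathrm{Bernoulli}(P_i(c))$: in that model the analogous events are independent across $c$ (so the product form is immediate), and the multinomial constraint $\sum_c X_{i,c} = 1$ intuitively makes covering $L^*$ no easier than in the Bernoulli model.
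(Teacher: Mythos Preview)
Your argument is correct and follows essentially the same route as the paper: restrict to $L^*$, use negative correlation of the events $A_c$ to pass to a product, bound each factor via $1-x \ge \exp(-x/(1-p))$, apply Jensen, and substitute $k$. The only cosmetic differences are that the paper cites \cite{ACK} for the negative correlation (where you supply the standard negative-association-of-multinomials argument yourself), and the paper inserts the extra step $\prod_c(1-y_c)\le \exp(-\sum_c y_c)$ so that Jensen is applied to the simpler convex function $e^{-x}$ rather than to $-\log(1-e^{-x/(1-p)})$; both versions yield the same $-\Omega(\log^3\Delta)$ exponent.
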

\begin{proof}
First, we show that 
\begin{equation}
\label{eqn:sumL}
 \Pr \left (L' \subseteq \{\phi_1, \dots, \phi_{\Delta} \} \right ) \leq \exp \left (  -\sum_{c \in L'} \exp \left (    - \frac{1}{1-p} \rho(c)    \right ) \right ).
\end{equation}
%\begin{lemma}
%\label{lem:coupon}
%Suppose the following hold:
%\begin{itemize}
%\item $L$ is a set of $k$ colors, and $p$ is some small number.
%\item $v_1, \dots, v_d$ is a set of vertices, and each vertex $v_i$ has a list $L(v_i)$ of $k$ colors and a probability distribution $\Pr_i$ on $L(v_i)$. (We write $\Pr_i(c) = 0$ for $c \not \in L(v_i)$.)
%\item For each $v_i$, $\Pr_i(c) < p$ for each $c \in L(v_i)$.
%\item For each color $c \in L$, $\rho(c) = \sum_{i = 1}^d \Pr_i(c)$.
%\end{itemize}
%Then, if a color is chosen from each list $L(v_i)$ independently at random according to the distributions $\Pr_i$, then the probability that each color in $L$ is chosen at some vertex $v_i$ is at most
%\[\exp \left (  -\sum_{c \in L} \exp \left (    - \frac{1}{1-p} \rho(c)    \right ) \right ).\]
%\end{lemma}
Consider a color $c \in L'$, and let $B_c$ be the event that $\phi_i = c$ for some value $i$ ($1 \leq i \leq \Delta$), i.e.~the event that the coupon collector obtains a coupon of color $c$. Since the variables $\phi_i$ are independent, $\Pr(B_c) = 1 - \prod_{i = 1}^{\Delta} (1 - P_i(c))$. Applying the inequality $1 - x \geq \exp (-{\frac{x}{1-x}} ) > \exp (-{\frac{x}{1-p}} )$ for $x < p$, we see that
\[\Pr(B_c) < 1 - \exp \left (-\frac{1}{1-p} \sum_{i = 1}^{\Delta} P_i(c) \right ) = 1 - \exp \left ( - \frac{1}{1 - p} \rho(c) \right  ).\]
Furthermore,
Alon, Cambie, and Kang~\cite[Section 3]{ACK}
show that the individual coupon collection events $\{B_c: c \in L'\}$ are negatively correlated, 
so the probability of the event 
%{\LS I think we want product here:}
%{\PB Formally, I'm considering the events $B_c$ to be subsets of a probability space, and the intersection of these events means that all of these events happen at the same time. Is this the correct way of thinking?} {\LS Yes, I understand, intersection is correct to use here.}
$\bigcap_{ c\in L'} B_c$, or equivalently the event $L' \subseteq \{\phi_1, \dots, \phi_{\Delta} \}$, is less than
\[ \prod_{c\in L'} \left (1 - \exp \left( - \frac{1}{1 - p} \rho(c) \right ) \right ) \leq \exp \left ( - \sum_{c \in L'} \exp \left ( - \frac{1}{1 - p} \rho(c) \right  ) \right ),\]
proving (\ref{eqn:sumL}).
%If $a = 1$, then we use the fact that $\sum_{c \in L} \rho(c) = \Delta$ and the fact that $f(x) = e^{-x}$ is a convex function,
%we immediately have an upper bound
%$\Pr \left (\{\phi_1, \dots, \phi_{\Delta} \} = L \right ) \leq \exp \left (- k \exp (\frac{\Delta / k}{1-p} ) \right )$. If we substitute our value of $k$, then the argument of the outer exponential function becomes $- \frac{\Delta}{(1 - p - o(1)) \log \Delta} \cdot \exp (4 \log \log \Delta - \log \Delta ) < - \log^2 \Delta$, so the lemma holds.

%On the other hand, suppose that $a < 1$. 
By possibly taking a subset of $L^*$, we assume without loss of generality that %$\epsilon < 1$ and that 
$|L^*| = \epsilon k$. %We write $L_* = L \setminus L
%^*$.
By (\ref{eqn:sumL}), 
\begin{equation}
    \label{eqn:split}
    \Pr \left ( L' \subseteq \{\phi_1, \dots, \phi_{\Delta} \}  \right ) \leq   \exp \left ( - \sum_{c \in L'} \exp \left( - \frac{1}{1 - p} \rho(c) \right )  \right ) \leq 
 \exp \left ( - \sum_{c \in L^*} \exp \left( - \frac{1}{1 - p} \rho(c) \right )  \right ).
\end{equation}
Since the function $f(x) = e^{-x}$ is convex, and since $\frac{1}{\epsilon k}\sum_{c \in L^*} \rho(c) \leq \frac{ a \Delta }{k} $,
Lemma \ref{lem:Jensen} implies that
\[\sum_{c \in L^*} \exp \left ( - \frac{1}{1 - p} \rho(c) \right ) \geq \epsilon k \exp \left  (- \frac{a\Delta/k}{(1-p)} \right ). \]
%Furthermore, as $\sum_{c \in L_*} \rho(c) \leq \sum_{c \in L} \rho(c) < \Delta$, 
%\[  \sum_{c \in L_*} \exp \left ( - \frac{1}{1 - p} \rho(c) ) \right ) > (1 - \epsilon)k \exp \left ( - \frac{\Delta}{(1-p)(1 - \epsilon) k }  \right ).\]
Therefore, the argument of the outer exponential in (\ref{eqn:split}) is at most
\[  - \epsilon k \exp \left (- \frac{a \Delta / k }{(1-p)} \right ).\]
%As before, after plugging in our value of $k$, the argument of the exponential in (\ref{eqn:split}) is less than $-\log^2 \Delta$, and the lemma holds.
Now, if we substitute our value of $k$, then the argument of the outer exponential function in (\ref{eqn:split}) is at most $-  \Omega  \left ( \frac{ \Delta}{ \log \Delta} \right ) \exp (4 \log \log \Delta - \log \Delta ) < - \log^2 \Delta$,  for large enough $\Delta$, 
%\{\LS Shall we say that this hold for $\Delta >=3$? I know we say $\Delta$ large enough in the statement. I actually do not know where else in this proof we must use $\Delta$ is large but I may have missed something.} 
%{\PB I think it's possible to find a constant lower bound for $\Delta$, but it might be a bit messy to show. Also, since we would want $- \epsilon k \exp \left (- \frac{a \Delta / k }{(1-p)} \right )$ to be less than $-\log^2 \Delta$, the lower bound would have to depend on $\epsilon$. I am not sure if this is worth it.} {\LS Okay, then I have added ", for large enough $\Delta$" after the comma above so it it clear where we are using this assumption.}
so the lemma holds.
\end{proof}

Before proving Theorem \ref{thm:main},
we prove the theorem with a weaker  coefficient of $\frac{4}{5} + o(1)$ as a warmup.
\begin{theorem}
\label{thm:45}
If $G$ is a bipartite graph of maximum degree $\Delta$, then $\ch(G) \leq  (\frac{4}{5} + o(1)) \frac{ \Delta}{\log \Delta}$.
\end{theorem}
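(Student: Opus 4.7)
I would prove Theorem~\ref{thm:45} by applying the Lov\'asz Local Lemma (Lemma~\ref{lem:LLL}) together with Lemma~\ref{lem:coupon} to a random partial coloring of the $A$-part of the bipartite graph $G = (A \cup B, E)$. Let $k = (\tfrac{4}{5} + o(1)) \tfrac{\Delta}{\log\Delta}$ and assume without loss of generality that every list $L(v)$ has size exactly $k$. For each vertex $v \in A$, independently, with probability $\tfrac{1}{5}$ leave $v$ uncolored, and with probability $\tfrac{4}{5}$ pick $\phi(v) \in L(v)$ uniformly at random; this yields $P_v(c) = \tfrac{4}{5k}$ for every $c \in L(v)$. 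The key gain from this ``skip with probability $\tfrac{1}{5}$'' trick is exactly the factor $\tfrac{4}{5}$ appearing in the theorem statement.

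For each $b \in B$, let $E_b$ denote the event that every color of $L(b)$ is taken by some colored neighbor of $b$. Then $\rho(c) = \sum_{v \in N(b)} P_v(c) \leq \tfrac{4\Delta}{5k}$ for every $c \in L(b)$, so the hypothesis of Lemma~\ref{lem:coupon} holds with $L^* = L(b)$, $\epsilon = 1$, $a = \tfrac{4}{5}$, and $p = o(1)$; this gives $\Pr(E_b) < \exp(-\log^2 \Delta)$. Each $E_b$ depends only on the random choices at vertices in $N(b)$ and is therefore mutually independent of all but at most $\Delta^2$ other such events. Since $4 \Delta^2 \exp(-\log^2\Delta) < 1$ for sufficiently large $\Delta$, Lemma~\ref{lem:LLL} yields a positive-probability outcome in which every $b \in B$ has a free color in $L(b)$; I color $B$ greedily using these free colors.

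The principal obstacle is then to complete the list coloring on $A$: each uncolored $v \in A$ has $|L(v)| = k \ll \Delta$ and up to $\Delta$ $B$-neighbors with assigned colors, so naive counting does not produce a free color in $L(v)$. My plan to address this is to randomize the choice of $\phi(b)$ by sampling it uniformly from $b$'s free list, then include additional bad events $E_v$ (that every color of $L(v)$ equals $\phi(b)$ for some $b \in N(v)$) for uncolored $v \in A$ in the Local Lemma and apply Lemma~\ref{lem:coupon} symmetrically from the $A$-side. The delicate step is to establish that, conditional on the first-round LLL succeeding, the free lists at each $b \in B$ are large enough and their random choices spread out enough that the hypothesis of Lemma~\ref{lem:coupon} is again satisfied from each uncolored $v$'s perspective. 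Since a direct expected-value computation suggests the typical free-list size at $b$ is only polylogarithmic in $\Delta$, verifying this---possibly by adjusting the first-round distributions or by a more careful coupling argument---is the main technical difficulty I expect to face.
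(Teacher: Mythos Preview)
Your proposal has a genuine gap, and it is precisely the one you flag at the end: the uncolored $A$-vertices. The ``skip with probability $\tfrac15$'' device buys you nothing, because after you greedily color $B$, each uncolored $v\in A$ still has $\Delta$ colored $B$-neighbors and a list of size $k=(\tfrac45+o(1))\tfrac{\Delta}{\log\Delta}$, so you are back to the original problem with the same parameters. Your suggested fix---randomize $\phi(b)$ over $b$'s free list and run the Local Lemma again from the $A$-side---does not close the gap: the LLL outcome from round one only guarantees that each free list is nonempty, not that it is large, and in the worst instances the expected number of surviving colors at $b$ is sub-polylogarithmic (indeed possibly $O(1)$), so the per-color probabilities $P_b(c)$ need not be $o(1)$ and the hypothesis of Lemma~\ref{lem:coupon} fails from $v$'s side. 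There is also a dependency problem: the free lists at distinct $b$'s are functions of the same first-round randomness, so the second-round choices are not independent in the way Lemma~\ref{lem:coupon} requires.

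The paper's proof avoids this entirely by never leaving a vertex of $A$ uncolored. Instead of uniform-plus-skip, it uses a \emph{non-uniform} distribution on each $L(v)$: order the colors of $L(v)$ as integers and set $P_v(c)\propto 1-\tfrac{3}{4}\cdot\tfrac{I(v,c)}{k}$, where $I(v,c)$ is the position of $c$ in the ordered list. For a fixed $w\in B$ one then writes $z=\tfrac{1}{\Delta k}\sum_{v\in N(w)}|L(v)\cap L(w)|$ and shows two bounds: the last $\epsilon k$ colors of $L(w)$ have average $\rho_w(c)\lesssim\tfrac{8}{5}(1-\tfrac34 z)\tfrac{\Delta}{k}$ (because high-index colors of $L(w)$ sit at high indices in many $L(v)$), while the overall average satisfies $\tfrac1k\sum_c\rho_w(c)\lesssim\tfrac{8}{5}z(1-\tfrac38 z)\tfrac{\Delta}{k}$. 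The minimum of these two expressions over $0\le z\le 1$ is at most $\tfrac45+o(1)$, so Lemma~\ref{lem:coupon} applies with $a=\tfrac45+\gamma$ and the LLL finishes in a single round. The $\tfrac45$ thus comes from the interplay between the linear weighting and the overlap parameter $z$, not from thinning the coloring.
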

\begin{proof}
We fix an arbitrarily small value $\gamma > 0$ and assume that the maximum degree $\Delta$ of $G$ is sufficiently large with respect to $\gamma$.
Without loss of generality, we assume that $G$ is $\Delta$-regular.
We let each vertex $v \in V(G)$ have a list $L(v)$ of $k = \left \lceil \frac{(4/5 + \gamma) \Delta}{(1 - 1/\sqrt{\Delta})(\log \Delta - 4 \log \log \Delta) } \right \rceil$ colors, represented as integers in increasing order.
%We fix values $\alpha = 1/8$ and $\beta = 1/5$. 
We  show that 
%if each vertex $v \in V(G)$ has a list $L(v)$ of $k$ colors, then 
$G$ has an $L$-coloring.

We partition $V(G)$ into two partite sets $A$ and $B$. We will create a probability distribution on each list $L(v)$ for $v \in A$ and  use these distributions to color the vertices $v \in A$ independently. Then, we will use Lemma \ref{lem:coupon} and the Lov\'asz Local Lemma to show that with positive probability, each vertex $w \in B$ still has an available color even after all vertices in $A$ have been colored.

For each vertex $v \in V(G)$, %we will create a reduced list $L'(v) \subseteq L(v)$ as follows. First, 
we write $L(v) = (c_1, \dots, c_{k})$ as an increasing integer sequence, and for each color $c \in L(v)$, we write $I(v,c) = i$ if and only if $c = c_i$---that is, if and only if $c$ is in the $i$th position in $L(v)$. We say that $I(v,c)$ is the \emph{index} of $c$ in $L(v)$.
For each vertex $w \in B$, and neighbor $v \in N(w)$, we define $\ell_{v,w} = |L(v) \cap L(w)|$. Then, 
for each vertex $w \in B$, we define the \emph{weight} of $w$ as 
\[Z(w) = \sum_{v \in N(w)} \ell_{v,w}.\]
 Clearly, for each vertex $w \in B$, $Z(w) \leq \Delta k$. 
%We say that $w$ is \emph{light} if $Z(w) < (1 - \frac{1}{3} \beta) \Delta k$.
%
For each vertex $v \in A$ and $c \in L(v)$, we write 
\[P_v(c) =  \frac{8/5}{k(1-\frac{3}{5k})} \left (1 - \frac{3}{4} \cdot \frac{ I(v,c)}{ k} \right ).\]
and for $c \in \mathbb N \setminus L(v)$, we write $P_v(c) = 0$.
For each color $c \in L(w)$, we write $N_c(w)$ for the set of neighbors $v \in N(w)$ satisfying $c \in L(v)$.
%For convenience, we  define $I(v,c) = \frac{4}{3}k$ for each color $c \not \in L(v)$, 
Observe that $\sum_{c \in L(v)}P_v(c) = 1$.
For each $w \in B$ and $c \in L(w)$, we write $\rho_w(c) = \sum_{v \in N_c(w)} P_v(c)$.

For each $v \in A$,
we color $v$ with a single color of $L(v)$ using the probability distribution $P_v$, so that $v$ receives each color $c \in L(v)$ with the probability $P_v(c)$.
Then, we  use the Lov\'asz Local Lemma to show that with a positive probability, our random coloring of $A$ can be extended to an $L$-coloring of $G$.
Observe that each color in $L(v)$ is used with a probability of (much) less than $1/\sqrt{\Delta}$.

Now,
we fix a vertex $w \in B$, and we aim to show that with probability at least $1 - \exp(-\log^2 \Delta)$, 
$L(w)$ contains a color which is not used 
%{\LS I think by "used" we mean that no neighbor of $w$ is colored by that color; I think this may be more clear.} {\PB Is this okay?} {\LS Yes.} 
to color any neighbor of $w$, so that we can extend our $L$-coloring of $A$ to $w$.
We write $z = \frac{Z(w) }{ \Delta k}$, and we fix a small constant $\epsilon > 0$. For each
color $c \in L(w)$ satisfying $I(w,c) \geq (1-\epsilon)k$, 
it holds for each $v \in N_c(w)$ that 
at most $\epsilon k$ colors $c' \in L(v) \cap L(w)$ satisfying $c' > c$ appear in $L(v)$; hence, 
$I(v,c) \geq \ell_{v,w}- \epsilon k$. Therefore,
%{\LS Peter, I think that $I(w,c)$ should be $I(v,c)$, otherwise I cannot conclude the next line.}
%{\PB I added another line of explanation. Is it okay now?} {\LS Okay, I was not able to make this connection between the two. Now I see, it is fine.}
%then
%then since our color lists are sorted in increasing order,
\[\sum_{v \in N(w) \setminus N_c(w)} \frac 43 k + \sum_{v \in N_c(w)} I(v,c) \geq (z - \epsilon)\Delta  k,\]
as the term corresponding to $v \in N(w)$ contributes at least $\ell_{v,w} - \epsilon k$ to the sum. 
Therefore, for each color $c \in L(w)$ satisfying $I(w,c) \geq (1-\epsilon)k$,  % {\LS same here?}
\begin{eqnarray*}
\rho_w(c) = \sum_{v \in N_c(w)} P_v(c) &=& \frac{8/5+o(1)}{k} \sum_{v \in N_c(w)   } \left (1 - \frac{3}{4} \cdot \frac{I(v,c)}{k} \right ) \\
 &=& \frac{8/5+o(1)}{k} \left ( \sum_{v \in N(w) \setminus N_c(w) } \left (1 - \frac 3{4k} \cdot  \frac 43 k \right ) +  \sum_{v \in N_c(w)   } \left (1 - \frac{3}{4k} \cdot I(v,c)\right ) \right )\\
  &=& \frac{8/5+o(1)}{k} \left ( \Delta - \frac {3}{4k} \left (\sum_{v \in N(w) \setminus N_c(w) } \frac 43 k +  \sum_{v \in N_c(w)   } I(v,c)\right ) \right ) \\
&\leq &  \frac{8/5+o(1)}{k}  \left (1 - \frac{3}{4}(z-\epsilon) \right ) \Delta. 
\end{eqnarray*}
Hence, for the last $\epsilon k$ colors $c \in L(w)$ (i.e.~those of largest index), the average value of $\rho_w(c)$ is at most $\left (\frac{8}{5}+o(1) \right ) \left ( 1 - \frac{3}{4} z + \frac{3}{4} \epsilon \right ) \frac{\Delta}{k}$.

On the other hand, 
by applying Lemma \ref{lem:Jensen} to the convex function $h(x) = \binom{x+1}{2} =  \frac{1}{2}x(x+1)$,
\[   \sum_{c \in L(w)} \sum_{ v \in N_c(w)   } I(v,c) 
\geq \sum_{v \in N(w)} \sum_{i=1}^{\ell_{v,w}} i = \sum_{v \in N(w)} \binom{\ell_{v,w} + 1}{2}
\geq \Delta \binom{Z(w) / \Delta + 1}{2} > \frac{1}{2}\Delta(zk)^2.
\]
%\[
%> \Delta \cdot \frac{1}{2}(z k)^2 + \frac{4}{3}(1 - z)k\Delta,\]
%as this sum is minimized when for each vertex $v \in N(w)$, $L(v)$ contains exactly $z k$ colors of $L(w)$.
Therefore, the average value $\rho_w(c)$ over all colors $c \in L(w)$ satisfies
\begin{eqnarray*}
 \frac{1}{k}\sum_{c \in L(w)} \rho_w(c) = \frac 1k \sum_{c \in L(w)} \sum_{v \in N_c(w)} P_v(c) & =& \frac{8/5+o(1)}{k^2} \sum_{c \in L(w)} \sum_{v \in N_c(w)}  \left (1 - \frac{3}{4} \cdot \frac{I(v,c)}{k} \right ) \\
 &<& \frac{8/5+o(1)}{k^2} \left ( Z(w) - \frac{3}{8} z^2 k \Delta \right ) \\
 &=& \left (\frac{8}{5}+o(1) \right )z \left (1-\frac{3}{8}z \right ) \frac{\Delta}{k}.  
\end{eqnarray*}

Hence, there exists
a subset $L^*(w) \subseteq L(w)$ of size at least $\epsilon k$ for which the average value $\rho_w(c)$ for $c \in L^*(w)$ is at most $\min \left  \{ \left ( 1 - \frac{3}{4} z + \frac{3}{4} \epsilon \right ) ,  z\left (1-\frac{3}{8} z \right ) \right \} \cdot \left ( \frac{8}{5} +o(1) \right ) \frac{\Delta}{k} < \left (\frac{4}{5} + \gamma \right ) \frac{\Delta}{k}$, where the inequality holds whenever $\epsilon$ is sufficiently small and $\Delta$ is sufficiently large with respect to $\gamma$. 

Now, for each vertex $w \in B$, we define a bad event $B_w$ as the event that after $A$ is randomly colored, no color in $L(w)$ is available---that is, that every color in $L(w)$ is used to color some vertex of $N(w)$. By applying Lemma \ref{lem:coupon} with our value of $\epsilon$, as well as with $a = \frac{4}{5} + \gamma$, $L' = L(w)$, $L^* = L^*(w)$, and $\{L_1, \dots, L_{\Delta}\} = \{L(v): v \in N(w)\}$, we find that $\Pr(B_w) < \exp(-\log^2 \Delta)$. 
Since each bad event occurs with probability less than $\exp(-\log^2 \Delta)$ and is independent with all but fewer than $\Delta^2$ other bad events, it follows from the Lov\'asz Local Lemma (Lemma \ref{lem:LLL}) that with a positive probability, no bad event occurs provided that $\Delta$ is large enough so that $4 \Delta^2 \exp(- \log^2 \Delta) \leq 1$. 
As we avoid all bad events $B_w$ with positive probability, it thus holds with positive probability that we can extend our $L$-coloring of $A$ to all of $G$.
Therefore, $G$ is $L$-colorable, and
the proof is complete.
\end{proof}
Next, we  show that the $\frac{4}{5} + o(1)$ coefficient from Theorem \ref{thm:45} can be reduced to $0.797$ using a coupon collection argument similar to that of Theorem \ref{thm:45}. While this improvement is minimal, 
the fact that the $\frac{4}{5} + o(1)$ coefficient can be broken with a similar argument
suggests that perhaps a more involved application of similar ideas can reduce the coefficient even more.

Before we prove that this lower coefficient can be achieved, we summarize the method used in Theorem \ref{thm:45}
and observe which parts of the method give room for improvement.
In our proof of Theorem \ref{thm:45}, we consider a vertex $w \in B$, and we hope to show that after randomly coloring all vertices in $A$, the probability that $w$ has no available color in $L(w)$ 
is small.
In order to show this, we aim to show that for some dense set of colors $c \in L(w)$, the values $\rho_w(c)$ are small. 
We write $Z(w) = z \Delta k$ for the weight of $w$, and we roughly describe two cases.

In the first case, if $z$ is large, then the colors $c \in L(w)$ appear at the lists $L(v)$ for neighbors $v \in N(w)$ with high frequency. Consequently,
the colors $c$ of large index $I(w,c)$ also have fairly large indices $I(v,c)$ for many neighbors $v \in N_c(w)$.
Since the probability of $c$
being used to color $v$ becomes small when $I(v,c)$ is large,
this means that colors $c \in L(w)$ of large index have small values $\rho_w(c)$. Specifically, we see in the proof of Theorem \ref{thm:45} that these colors $c$ of large index $I(w,c)$ approximately satisfy $\rho_w(c) \leq \frac{8}{5}(1 - \frac{3}{4} z) \frac{\Delta}{k}$.

In the second case, if $z$ is small, then for each neighbor $v \in N(w)$, $L(v)$ on average does not contain many colors from $L(w)$. Therefore, 
the average value $\rho_w(c)$ for all colors $c \in L(w)$ is small. Specifically, we see in the proof of Theorem \ref{thm:45} that the average value $\rho_w(c)$ is at most roughly $\frac{8}{5}z(1-\frac{3}{8}z) \frac{\Delta}{k}$. 

In both cases, we can find a dense set of colors $c \in L(w)$ for which the average value $\rho_w(c)$ is at most $(\frac{4}{5} + o(1)) \frac{\Delta}{k}$, with the upper bound being achieved when $z$ is close to $\frac{2}{3}$.
Now, let us consider 
the extremal case when this value $(\frac{4}{5} + o(1)) \frac{\Delta}{k}$ is achieved in more detail. 
When we compute
the upper bound $\frac{8}{5}z(1-\frac{3}{8}z) \frac{\Delta}{k}$
for the average value $\rho_w(c)$ over all colors $c \in L(w)$,
equality roughly holds only when the values $|L(v) \cap L(w)|$ are similar for each neighbor $v \in N(w)$ and when
the indices $I(v,c)$ for $c \in L(w)$ and $v \in N_c(w)$ are as low as possible.
Therefore, in the extremal case, for each neighbor $ v \in N(w)$, $|L(v) \cap L(w)| \approx \frac 23 k$, and the colors of $L(v) \cap L(w)$ roughly occupy
the first $\frac{2}{3}k$ indices of $L(v)$.
However, in this case, we can slightly increase the probabilities $P_v(c')$ for colors $c' \in L(v)$ with indices $I(v,c')$ close to $k$
without increasing the probabilities $P_v(c)$ of colors $c \in L(v) \cap L(w)$, as colors $c' \in L(v)$ with large indices $I(v,c')$ do not belong to $L(w)$.
This allows us to decrease the probabilities $P_v(c)$ of the colors $c \in L(v) \cap L(w)$, which reduces 
$P_v(c)$ for colors $c \in L(v) \cap L(w)$ and allows us us to reduce our coefficient below $\frac{4}{5}$.
On the other hand, if increasing the probabilities $P_v(c')$ for colors $c' \in L(v)$ of large index causes the probabilities $P_v(c)$ of many colors in $c \in L(v) \cap L(w)$ to increase, then this implies that the colors in $L(v) \cap L(w)$ for neighbors $v \in N(w)$ are not arranged as in the extremal case described above, and the method of Theorem \ref{thm:45} should still give a coefficient lower than $\frac{4}{5}$.

Using the approach outlined above,
we are ready to prove our improved coefficient.

\begin{theorem}
If $G$ is a bipartite graph of sufficiently large maximum degree $\Delta$, then $\ch(G) < 0.797 \frac{ \Delta}{\log \Delta}$.
\end{theorem}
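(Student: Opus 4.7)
Plan. We aim to repeat the proof of Theorem \ref{thm:45} verbatim except for the choice of the probability distribution $P_v$ used to color $A$. As explained in the paragraph preceding the theorem, the bottleneck in the proof of Theorem \ref{thm:45} occurs at $z := Z(w)/(\Delta k) \approx 2/3$, and in the extremal configuration the colors of $L(v) \cap L(w)$ lie in the first $2k/3$ positions of each $L(v)$. The idea is to modify $P_v$ so that, without increasing the probability assigned to any fixed prefix of $L(v)$, extra mass is placed on the tail of $L(v)$. We do this by replacing the linear distribution
\[P_v(c) \;\approx\; \frac{8/5}{k}\left(1 - \frac{3}{4} \cdot \frac{I(v,c)}{k}\right)\]
of Theorem \ref{thm:45} by a piecewise distribution on $L(v)$: linearly decreasing on the prefix $\{c : I(v,c) \leq \alpha k\}$ and equal to a larger constant on the suffix $\{c : I(v,c) > \alpha k\}$, where $\alpha$ is a parameter to be optimized (close to, but not exactly, $2/3$). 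The parameters are selected so that $\sum_{c \in L(v)} P_v(c) = 1$, each $P_v(c) < 1/\sqrt{\Delta}$ (so that Lemma \ref{lem:coupon} remains applicable with $p = 1/\sqrt{\Delta}$), and so that the resulting bounds on $\rho_w$ fall below $\frac{4}{5}\cdot \frac{\Delta}{k}$.

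With this modification the structure of the proof is identical to that of Theorem \ref{thm:45}. First I reduce to the case that $G$ is $\Delta$-regular, fix a bipartition $(A,B)$, set $k = \left\lceil 0.797 \Delta / ((1-1/\sqrt{\Delta})(\log \Delta - 4\log\log\Delta))\right\rceil$, color each $v \in A$ independently according to $P_v$, and then analyze, for each $w \in B$, the average of $\rho_w(c)$ over a suitably chosen $L^*(w) \subseteq L(w)$. As in Theorem \ref{thm:45}, the analysis splits into two cases: a \emph{large-$z$} regime, in which the last $\epsilon k$ colors $c \in L(w)$ satisfy $I(v,c) \geq \ell_{v,w} - \epsilon k$ in each $v \in N_c(w)$, so that the relevant $P_v(c)$ values are small; and a \emph{small-$z$} regime, where I apply Lemma \ref{lem:Jensen} to the convex function appropriate for the piecewise shape of $P_v$ to bound the average of $\rho_w(c)$ over all of $L(w)$. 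Both regimes yield bounds of the form $f(z) \cdot \Delta/k$, and the key calculation is to choose $\alpha$ and the tail height so that the joint worst case $\max_z \min(f_1(z), f_2(z))$ is strictly below $0.797$. Once this is established, Lemma \ref{lem:coupon} with $a = 0.797$ gives $\Pr(B_w) < \exp(-\log^2 \Delta)$ for each $w \in B$, and the Lov\'asz Local Lemma (Lemma \ref{lem:LLL}) finishes the proof exactly as in Theorem \ref{thm:45}.

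The main obstacle is the numerical optimization. Under the linear distribution of Theorem \ref{thm:45}, the large-$z$ and small-$z$ bounds both equal $\frac{4}{5}\cdot\frac{\Delta}{k}$ at $z = 2/3$; perturbing the distribution improves one of them but may worsen the other, and one must verify that a joint choice of $\alpha$ and the tail height strictly lowers the worst case to $0.797$ uniformly in $z$. The qualitative improvement is guaranteed by the authors' observation that in the extremal configuration the offending colors lie only on a prefix of $L(v)$, leaving slack on the tail of $L(v)$ that can absorb the shifted mass. The bulk of the work is therefore the explicit computation of the two bounding functions and the verification that their joint worst case drops from $\frac{4}{5}$ to $0.797$; once this is done, the remainder of the proof is a direct adaptation of Theorem \ref{thm:45}.
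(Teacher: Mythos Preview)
Your high-level strategy is the same as the paper's: replace the linear weight of Theorem~\ref{thm:45} by a piecewise one that is linear on a prefix of $L(v)$ and constant on a suffix, shifting mass toward the tail. (Two incidental discrepancies: the paper takes the breakpoint at $\tfrac{9}{10}k$, not near $\tfrac{2}{3}k$, and the suffix constant equals the linear value at the junction, so that the weight function $f$ stays continuous, non-increasing, and $\tfrac{3}{4k}$-Lipschitz --- the non-increasing property is used in the large-$z$ step.)

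There is, however, a genuine gap in the plan. Once $f$ is linear-then-flat it is \emph{convex}, and the large-$z$ bound
\[
\rho_w(c)\ \le\ \frac{1}{Ck}\sum_{v\in N(w)} f(\ell_{v,w})\qquad\bigl(I(w,c)\ge(1-\epsilon)k\bigr)
\]
is no longer determined by $z=\tfrac{1}{\Delta k}\sum_v\ell_{v,w}$ alone: in Theorem~\ref{thm:45} this sum is linear in the $\ell_{v,w}$, but now it depends on the whole profile $(\ell_{v,w})_v$. If you force a one-variable bound by majorizing the convex $f$ by its secant line and pair this with the Jensen bound on the concave cumulative $S(\ell)=\sum_{i\le\ell}f(i)$ in the small-$z$ regime, the worst case of $\min\{f_1(z),f_2(z)\}$ over $z$ lands well above $0.797$ (for the paper's $f$ it is about $0.83$). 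So the assertion that ``both regimes yield bounds of the form $f(z)\cdot\Delta/k$'' is exactly where the argument would fail. The paper fixes this by introducing two further parameters describing the profile: $y$, the fraction of neighbours with $\ell_{v,w}>\tfrac{9}{10}k$, and a parameter in $[0,\tfrac{1}{10}]$ recording their average excess over $\tfrac{9}{10}k$. Both the large-$z$ and small-$z$ estimates are then written as explicit functions of these three variables, and the minimum of the two is maximized over the feasible region numerically (Maple), yielding about $0.7963<0.7969$. The coupling of the two estimates through these extra parameters, rather than a one-dimensional $\max_z\min\{f_1(z),f_2(z)\}$, is the missing idea.
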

\begin{proof}
We assume that the maximum degree $\Delta$ of $G$ is sufficiently large.
Without loss of generality, we
assume that $G$ is $\Delta$-regular.
We let each vertex $v \in V(G)$ have a list $L(v)$ of $k = 10 \left \lceil \frac{1}{10} \cdot \frac{0.7969 \Delta}{(1 - 1/\sqrt{\Delta})(\log \Delta - 4 \log \log \Delta) } \right \rceil$ colors, represented as integers in increasing order. We observe that $10$ divides $k$.
%We fix values $\alpha = 1/8$ and $\beta = 1/5$. 
We aim to show that 
%if each vertex $v \in V(G)$ has a list $L(v)$ of $k$ colors, then 
$G$ has a proper $L$-coloring.

We partition $V(G)$ into two partite sets $A$ and $B$. 
Again,
for each vertex $v \in V(G)$, %we will create a reduced list $L'(v) \subseteq L(v)$ as follows. First, 
we write $L(v) = (c_1, \dots, c_{k})$ as an increasing integer sequence, and for each color $c \in L(v)$, we write $I(v,c) = i$ if and only if $c = c_i$. We again say that $I(v,c)$ is the \emph{index} of $c$ in $L(v)$.
For each vertex $w \in B$ and neighbor $v \in N(v)$, we define again $\ell_{v,w} = |L(v) \cap L(w)|$. Then, for each $w \in B$, we again define the \emph{weight} of $w$ as 
\[Z(w) = \sum_{v \in N(w)} \ell_{v,w}.\]
%For each pair $v \in A$ and $w \in B$, we write $\ell_{v,w} = |L(v) \cap L(w)|$.

We define a function $f:[1,k] \rightarrow \mathbb R$ as follows:
\[
f(x) =
\begin{cases}
1 -  \frac{3}{4 k}x  & \textrm{ if } x \leq \frac{9}{10} k , \\
\frac{13}{40} & \textrm{ if }  \frac{9}{10}k < x \leq k.
\end{cases}
\]
We write $C$ for the average value of $f(i)$ over $i \in \{1, \dots, k\}$ and observe that $C = \frac{503}{800} + o(1)$.
For each $c \in L(v)$, we write 
\[P_v(c) = \frac{1}{C k}  f \left ( I(v,c) \right ).\]
%Again, for convenience, we define $I(v,c) = \frac{4}{3}k$ for $c \not \in L(v)$ so that $P_v(c) = 0$ for $c \not  \in L(v)$. 
%{\PB This will break the Lipschitz property.}
If $c \not \in L(v)$, we write $P_v(c) = 0$.
Observe that $\sum_{c \in L(v)}P_v(c) = 1$. 
For each $w \in B$ and $c \in L(w)$, we again write $\rho_w(c) = \sum_{v \in N_c(w)} P_v(c)$, where $N_c(w)$ is the set of neighbors $v \in N(w)$ for which $c \in L(v)$.
As before, for each $v \in A$,
we color $v$ with a single color of $L(v)$ using the probability distribution $P_v$, so that $v$ receives each color $c \in L(v)$ with probability $P_v(c)$.
We aim to show that with positive probability, we can extend our $L$-coloring of $A$ to an $L$-coloring of $G$.

We fix a vertex $w \in B$,
and we aim to show that with probability at least $1 - \exp(-\log^2 \Delta)$,
some color of $L(w)$ is not used to color any neighbor of $w$, so that our $L$-coloring of $A$ can be extended to $w$.
As before, we write $z = \frac{Z(w) }{ \Delta k}$.
%,
%and for each neighbor $v \in N(w)$,
%we write $\ell_{v,w} = |L(v) \cap L(w)|$, so that $Z(w) = \sum_{v \in N(w)} \ell_{v,w}$. {\LS Should we use this $\ell$ notation also in the previous proof? Actually we could use  it also above in this proof in the definition of $Z(w)$.} 
%{\PB We could. Do you think this is better?} {\LS Maybe for consistency?} 
%{\PB I'll update this tomorrow. I am tired today and will probably make a mistake}
We define $0 \leq y \leq 1$ so that exactly $y \Delta$ neighbors $v \in N(w)$ satisfy $\ell_{v,w}> \frac{9}{10}k$.
We write $N'(w)$ for the set of $y \Delta$ neighbors $v \in N(w)$ for which $ \ell_{v,w}> \frac{9}{10}k$, and we write $N''(w) = N(w) \setminus N'(w)$ for the remaining set of $(1-y)\Delta$ neighbors of $w$.
%{\LS I would move the definition of $\alpha$ after the centered inequality below as it is not used in observing the inequality and it may confuse the reader.} 
We observe that
\[ z  \geq \frac{1}{k\Delta} \sum_{v \in N'(w)} \ell_{v,w} > \frac{9}{10} y.\]
We define $\alpha$ so that $\sum_{v \in N'(w)} \left ( \ell_{v,w} - \frac{9}{10}k \right ) = \alpha yk\Delta$, and we also observe that
$0 \leq \alpha  \leq \frac{1}{10}$.
As $ \sum_{v \in N(w)} \ell_{v,w} = zk\Delta $ and $\sum_{v \in N'(w)} \ell_{v,w} = (\alpha y + \frac 9{10}y )k\Delta$, it follows that 
\begin{equation}
\label{eqn:N''}
\sum_{v \in N''(w)} \ell_{v,w} = \sum_{v \in N(w)} \ell_{v,w} - \sum_{v \in N'(w)} \ell_{v,w} = \left (z - y \left (\alpha + \frac {9}{10} \right ) \right ) k\Delta.
\end{equation}

Now, 
we fix a small constant $\epsilon > 0$, and assume that $\Delta$ is sufficiently large with respect to $\epsilon$. We consider a color $c \in L(w)$ for which $I(w,c) \geq (1-\epsilon) k$.
As before, for each $v \in N_c(w)$, $I(v,c) \geq \ell_{v,w} - \epsilon k$.
We compute an upper bound on $\rho_w(c)$ as follows, using the fact that $f$ is decreasing 
%{\LS We are using the assumption that $f$ is decreasing and Lipschitz in the second row below. But do we really need to assume that $f$ is decreasing? We are replacing the larger values $f(\ell_{v,w} - \epsilon k)$ by smaller values $f(\ell_{v,w})$ plus an estimate by Lipschitz property on their difference which is the difference of arguments  $\ell_{v,w}-(\ell_{v,w}-\epsilon k)=\epsilon k$}
%{\PB I was imagining that the decreasing property is used in the first line, since $f(I(v,c))$ is being upper bounded by $f$ evaluated at a lower bound for $I(v,c)$.}
 %{\LS Oh, yes, you are right, it i sused there!} 
 and is
$\frac{3}{4k}$-Lipschitz.
\begin{eqnarray*}
\rho_w(c) &=& \frac{1}{Ck} \sum_{v \in N_c(w)} f(I(v,c)) \leq  \frac{1}{Ck} \sum_{v \in N(w)} f(\ell_{v,w} - \epsilon k)  \\
&\leq & \frac{3 \epsilon}{4Ck} \Delta  + \frac{1}{Ck} \sum_{v \in N(w)} f(\ell_{v,w})  \\
&= & \frac{3 \epsilon}{4Ck} \Delta  + \frac{1}{Ck} \left ( \sum_{v \in N''(w)} f(\ell_{v,w}) + \sum_{v \in N'(w)} f(\ell_{v,w})  \right )\\
&= & \frac{3 \epsilon}{4Ck} \Delta  + \frac{1}{Ck}\left ( \sum_{v \in N''(w)} \left ( 1 - \frac{3 }{4k}  \ell_{v,w} \right ) + \sum_{v \in N'(w)}  \left ( 1 - \frac{3}{4k}  \ell_{v,w} + \frac{3}{4k} \ell_{v,w} - \frac{27}{40} \right ) \right )\\
 &=& \frac{3 \epsilon}{4Ck} \Delta  + \frac{1}{Ck} \left ( \sum_{v \in N(w)} \left ( 1 - \frac{3}{4k}  \ell_{v,w} \right ) + \sum_{v \in N'(w)} \left (\frac{3}{4k} \ell_{v,w} - \frac{27}{40} \right ) \right )\\
 &=&  \frac{3 \epsilon}{4Ck} \Delta  + \frac{1}{Ck}  \left ( \Delta \left ( 1 - \frac{3}{4} z \right ) + \frac{3}{4k} \sum_{v \in N'(w)} \left ( \ell_{v,w} - \frac{9}{10}k \right )  \right )\\
 &=& \frac{\Delta}{C k} \left (  1 - \frac{3}{4} z  + \frac{3}{4} \alpha y + \frac{3}{4}\epsilon \right ).
\end{eqnarray*}

Hence, the average value of $\rho_w(c)$
for the $\epsilon k$ colors $c \in L(w)$
with greatest indices $I(w,c)$
is at most $\frac{\Delta}{Ck}\left ( 1 + \frac{3}{4} (-z +  \alpha y + \epsilon ) \right ) $.

On the other hand, the average value $\rho_w(c)$ over all colors $c \in L(w)$ satisfies
\begin{eqnarray}\
\notag
 \frac{1}{k}\sum_{c \in L(w)} \rho_w(c) &=&  \frac{1}{C k^2} 
 \sum_{v \in N(w)} 
 \sum_{c \in L(v)}  f(I(v,c)) 
 \leq
 \frac{1}{C k^2} 
 \sum_{v \in N(w)}
 \sum_{i=1}^{\ell_{v,w}}
 f(i) \\
 \notag
 & = & 
 \frac{1}{C k^2} \left (
 \sum_{v \in N''(w)}
 \sum_{i=1}^{\ell_{v,w}} \left (1- \frac{3}{4k}i \right ) + \sum_{v \in N'(w)} \left (
 \sum_{i=1}^{\frac{9}{10}k }
 (1- \frac{3}{4k}i )  + \sum_{i = \frac{9}{10}k + 1}^{\ell_{v,w}} \frac{13}{40} \right ) \right ) \\
 &< & 
 \notag
 \frac{1}{C k^2} 
 \sum_{v \in N''(w)}
 \sum_{i=1}^{\ell_{v,w}} \left (1- \frac{3}{4k}i  \right )+ \frac{1}{C k^2}  \sum_{v \in N'(w)} \left (
 \frac{9}{10}k- \frac{3}{4k} \cdot \frac{1}{2} \left (\frac{9}{10}k \right )^2 + \frac{13}{40} \left (\ell_{v,w} - \frac{9}{10} \right )
 \right )
 \\
 %\notag
 % &=& 
 %\frac{1}{C k^2} 
 %\sum_{v \in N''(w)}
 %\sum_{i=1}^{\ell_{v,w}} \left (1- \frac{3}{4k}i \right ) 
 %+  \frac{y\Delta}{Ck} \left (   \frac{477 }{800 }    +
 % \frac{13 }{40}  \alpha  \right ) \\
  \label{eqn:gross}
  &=&
  \frac{1}{C k^2} 
 \sum_{v \in N''(w)} \left (
\ell_{v,w} - \frac{3}{4k} \binom{\ell_{v,w} + 1}{2}  \right )
 +  \frac{y\Delta}{Ck} \left (   \frac{477 }{800 }    +
  \frac{13 }{40}  \alpha  \right ).
  %\\
%  &=& 
% \frac{1}{C k^2} 
% \sum_{v \in N''(w)} \left (
%  \ell_{v,w} -   \frac{3}{4k} {{\ell_{v,w}+1} \choose 2} \right )+  y\Delta \cdot \frac{9}{10}(k - \frac{3}{8}(\frac{9}{10}k + 1)) + \frac{13}{40} \cdot \alpha y k \Delta  
% \\ 
%&=& \frac{\Delta}{C k} \left ( z - \frac{3}{8} y (\frac{9}{10} + a)^2   - \frac{3}{8} \cdot \frac{(z - \frac{9}{10}y - ay )^2}{1-y}  \right ) \\
\end{eqnarray}

By (\ref{eqn:N''}), the average value $\ell_{v,w}$
for $v \in N''(w)$
is $ \overline{\ell} := \frac{k(z - (\frac{9}{10} + \alpha)y)}{1-y}$.
With this notation, we have 
$\sum_{v \in N''(w)} \ell_{v,w} = (1-y) \Delta \overline{\ell}$.
%Furthermore, the sum above is maximized if all values $\ell_{v,w}$ for $v \in N'(w)$ are equal to $\overline{\ell}$. 
Furthermore, by applying Lemma \ref{lem:Jensen} to the convex function $h(x) = \binom{x+1}{2} =  \frac{1}{2}x(x+1)$,
\[\sum_{v \in N''(w)} \binom{\ell_{v,w} + 1}{2} \geq (1-y)\Delta \binom{ \overline{\ell}  + 1 }{2}  > (1-y) \Delta \cdot \frac 12 \overline{\ell}^2. \]
Therefore, (\ref{eqn:gross}) implies that
\begin{eqnarray*}
 \frac{1}{k}\sum_{c \in L(w)} \rho_w(c)
&<& 
\frac{(1-y)\Delta}{Ck^2} \left ( \overline{\ell} - \frac{3}{4k} \cdot \frac{1}{2}\overline{\ell}^2\right )
 +  \frac{y\Delta}{Ck} \left (   \frac{477 }{800 }    +
  \frac{13 }{40}  \alpha  \right ) \\
 &=& 
\frac{\Delta}{Ck} \left [ \left  (z - \left (\frac{9}{10} + \alpha \right )y \right ) \left ( 1 - \frac{3}{8} \cdot  \frac{z - \left (\frac{9}{10} + \alpha \right )y}{1-y}\right )
+  y \left (   \frac{477 }{800 }    +
  \frac{13 }{40}  \alpha  \right ) \right ] .
 \end{eqnarray*}
Hence, writing $g(\alpha,y,z) = z - \left (\frac{9}{10} + \alpha \right )y$, there exists a dense subset $L^*(w) \subseteq L(w)$ of size at least $\epsilon k$ for which the average value $\rho_w(c)$ for $c \in L^*(w)$ is at most \[ \frac{\Delta}{Ck} \min \left  \{ 1 + \frac{3}{4} (-z +  \alpha y + \epsilon )   ,   g(\alpha,y,z) \left ( 1 - \frac{3}{8} \cdot \frac{g(\alpha,y,z)}{1-y}\right )
+  y \left (   \frac{477 }{800 }    +
  \frac{13 }{40}  \alpha  \right )  \right \}.\]

We would like to show that this quantity is less than $\frac{0.7969 \Delta}{k}$ when $\epsilon$ is sufficiently small and $\Delta$ is sufficiently large. To establish this upper bound, we first observe that if $z - \alpha y > 0.66535$, then $\frac{\Delta}{Ck} ( 1 + \frac{3}{4} (-z +  \alpha y + \epsilon ) ) < (0.7968  + \frac{3}{4} \epsilon + o(1) ) \frac{\Delta}{k}$, which is smaller than $\frac{ 0. 7969 \Delta}{k}$ when $\epsilon$ is sufficiently small and $\Delta$ is sufficiently large. Hence, we may assume that $z - \alpha y \leq 0.66535$. Since $y \leq 1$ and $\alpha \leq 0.1$, this implies in particular that $z < 0.8$. Furthermore, since $z > 0.9y$, we may therefore assume that $y < 0.9$.
We would like to show that under these constraints, 
\[\frac{\Delta}{Ck} \left ( g(\alpha,y,z) \left ( 1 - \frac{3}{8} \cdot \frac{g(\alpha,y,z)}{1-y}\right )
+  y \left (   \frac{477 }{800 }    +
  \frac{13 }{40}  \alpha  \right )   \right ) <  \frac{ 0. 7969 \Delta}{k},\]
which will prove our upper bound. To this end, we execute the following commands in Maple:
\begin{verbatim}
    h := (a, y, z) -> 800/503*(z - (0.9 + a)*y)* 
    (1 + (-1)*0.375*(z - (0.9 + a)*y)/(1 - y)) + 800/503*y*(477/800 + 13/40*a):
    
    with(Optimization):
    
    Maximize(h(a, y, z), {0 <= a, a <= 0.1, 0 <= y, y <= 0.9, 0 <= z, z <= 0.8,
    -a*y + z <= 0.66535});
\end{verbatim}
This gives us the following output:
\begin{verbatim}
     [0.796309237086130106, [a = 0.100000000000000, y = 0.202933582180192, 
     z = 0.685643358218019]]
\end{verbatim}
As a result, we find that under our constraints on $\alpha$, $y$, and $z$, our expression is less than $(0.7964 + o(1)) \frac{\Delta}{k}$, which is less than our desired upper bound when $\Delta$ is sufficiently large. Hence, there exists a dense subset $L^*(w) \subseteq L(w)$ of at least $\epsilon k$ colors for which the average value $\rho_w(c)$ for $c \in L^*(w)$ is less than $ \frac{0.7969 \Delta}{k}$.

As before, for each vertex $w \in B$, we define a bad event $B_w$ to be the event that all colors of $L(w)$ are used by the neighbors of $w$. By applying Lemma \ref{lem:coupon} with our value $\epsilon$, as well as with $a = 0.7969$, $L = L(w)$, $L^* = L^*(w)$, and $\{L_1, \dots, L_k\} = \{L(v): v \in N(w)\}$, we find that $\Pr(B_w) < \exp(-\log^2 \Delta)$. As before, we apply the Lov\'asz Local Lemma (Lemma \ref{lem:LLL}) when $\Delta$ is sufficiently large to find that with positive probability, no bad event occurs. Hence,
with positive probability, our random $L$-coloring of $A$ extends to an $L$-coloring of $G$, completing the proof.
\end{proof}

The coefficient of $0.797$ in Theorem \ref{thm:main} is not the best possible, and small improvements can be made through slight adjustments to the function $f$.
However, making significant additional improvements to the coefficient using this method seems difficult without additional ideas.

\section{Acknowledgement}
We are grateful to Abhishek Dhawan for helpful discussions about an earlier draft of this paper.

\bibliographystyle{abbrv}
\bibliography{bib}

\end{document}